\theoremstyle{plain}
\newtheorem{theorem}{Theorem}[section]
\newtheorem{lemma}[theorem]{Lemma}
\newtheorem{corollary}[theorem]{Corollary}
\newtheorem{proposition}[theorem]{Proposition}
       \newtheorem{definition-proposition}[theorem]{Definition-Proposition}
\theoremstyle{definition}
\newtheorem{definition}[theorem]{Definition}
\newtheorem{remark}[theorem]{Remark}
\theoremstyle{remark}
\newcounter{zahl}
\def\theenumi{(\alph{enumi})}
\def\p@enumii{\theenumi}
\newcommand{\DS}{\displaystyle}
\newcommand{\SC}{\scriptstyle}
\DeclareMathOperator{\Hom}{Hom}
\DeclareMathOperator{\charakt}{char}
\renewcommand{\phi}{\varphi}
\renewcommand{\epsilon}{\varepsilon}
\newcommand{\BOne} {{\mathchoice{\hbox{\rm1\kern-2.7pt l\kern.9pt}}
                              {\hbox{\rm1\kern-2.7pt l\kern.9pt}}
                              {\hbox{\scriptsize\rm1\kern-2.3pt l\kern.4pt}}
                              {\hbox{\scriptsize\rm1\kern-2.4pt l\kern.5pt}}}}
\newcommand{\BA}{{\mathbb{A}}}
\newcommand{\BC}{{\mathbb{C}}}
\newcommand{\BD}{{\mathbb{D}}}
\newcommand{\BG}{{\mathbb{G}}}
\newcommand{\BQ}{{\mathbb{Q}}}
\newcommand{\BZ}{{\mathbb{Z}}}
\newcommand{\CG}{{\cal{G}}}
\newcommand{\CO}{{\cal{O}}}
\newcommand{\CP}{{\cal{P}}}
\let\setminus\smallsetminus
\newcommand{\ul}[1]{{\underline{#1}}}
\newcommand{\invlim}[1][]{\ifthenelse{\equal{#1}{}}
{\DS \lim_{\longleftarrow}}
{\DS \lim_{\underset{#1}{\longleftarrow}}}
}
\newcommand{\dirlim}[1][]{\ifthenelse{\equal{#1}{}}
{\DS \lim_{\longrightarrow}}
{\DS \lim_{\underset{#1}{\longrightarrow}}}
}
\newcommand{\dotBD}{\vbox{\hbox{\kern2pt\bf.}\vskip-4.5pt\hbox{$\BD$}}}
\newcounter{commentcounter}
\def\?{\ 
{\bf\color{red}???}\ 
\immediate\write16{}
\immediate\write16{Warning: There was still a question mark . . . }
\immediate\write16{}}
\def\isoto{\stackrel{}{\mbox{\hspace{1mm}\raisebox{+1.4mm}{$\SC\sim$}\hspace{-3.5mm}$\longrightarrow$}}}
\newbox\mybox
\def\arrover#1{\mathrel{
       \setbox\mybox=\hbox spread 1.4em{\hfil$\scriptstyle#1$\hfil}
       \vbox{\offinterlineskip\copy\mybox
             \hbox to\wd\mybox{\rightarrowfill}}}}
\begin{document}


\author{Somayeh Habibi and Farhad Rahmati}

\title{A remark on a result of Huber and Kahn}

\maketitle

\begin{abstract}

A. Huber and B. Kahn construct a relative slice filtration on the motive $M(X)$ associated to a principal $T$-bundle $X\to Y$ for a smooth scheme $Y$. As a consequence of their result, one can observe that the mixed Tateness of the motive $M(Y)$ implies that the motive $M(X)$ is mixed Tate. In this note we prove the inverse implication for a principal $G$-bundle, for a split reductive group  $G$. \\
\noindent        
\it Mathematics Subject Classification (2000)\/: 14C25\\
\noindent
\textbf{Keywords:} Voevodsky motives; mixed Tate motives; slice filtration; $G$-bundles
\end{abstract}

\section*{Introduction}

The existence of a universal cohomology theory for schemes was first envisioned by A. Grothendieck in a letter to J.-P. Serre in 1964, see \cite{CGrothSerre}. He further developed the theory of pure Chow motives for smooth projective varieties as a means towards this purpose; e.g. see Demazure \cite{Dem} and Kleiman \cite{Klei}. Later, in the last decade of 20th century and first decade of 21st century, a remarkable progress in this theory has been achieved by V. Voevodsky et al., e.g. see \cite{VoeMor}, \cite{MVW}, \cite{Voe} and \cite{VoeBK}, who constructed several triangulated candidates for the category of motives over a perfect field $k$, such as
$$
DM_{gm}^{eff}(k), ~ DM_{-}^{eff}(k),~ DM_{-,et}^{eff}(k),~ DM_h(k)_\BQ ~\text{and etc.},
$$
that generalize the Grothendieck's construction beyond the smooth projective case. These are mainly categories of bounded complexes of sheaves with ``homotopy invariant” homology. As the main achievement of his approach, he proved the general Bloch–Kato conjecture \cite{VoeBK}.\\

In this paper we crucially use the machinery of slice filtration introduced by A. Huber and B. Kahn in \cite{H-K}. This theory gives a
well-behaved filtration on a motive $M$ in the triangulated category $DM_{-}^{eff}(k)$, whose successive cones are unique up to unique isomorphism and functorial in $M$, and furthermore, taking morphisms to $\BZ(n)$ for various $n$ gives a spectral sequence converging to its motivic cohomology. This filtration is particularly interesting when $M$ is mixed Tate (i.e. it lies in the subcategory generated by $\BZ(0)$ and the Tate object $\BZ(1)$), e.g. see \cite[Section 6]{H-K}. Note that the motive associated to many interesting varieties arising in the theory of algebraic groups, and also elsewhere, lie in the category of mixed Tate motives. The examples include, the motives associated to split reductive groups, see \cite{Big} and \cite{H-K}, projective homogeneous varieties, see\cite{Koeck}, Schubert varieties inside flag varities, and even Schubert varieties inside affine Grassmannian, see \cite{A-H} and also \cite[Subsection 4.4.1 and Section 5.4]{Hab}.\\

As an application of their theory,  A. Huber and B. Kahn construct a relative slice filtration on a principal $T$-bundle, for a split torus $T$, over a perfect field $k$, see \cite[Section 8]{H-K}. Consequently, one can see that for a principal $T$-bundle $X$ over a smooth variety $Y$, the motive $M(X)$ is mixed Tate if $M(Y)$ is mixed Tate. Using a geometric approach, in \cite{A-H}, the authors proved that (under some conditions) the motive of a $G$-bundle $X$ over a scheme $Y$, for a split reductive group $G$, lies in the category generated by mixed Tate motives and the motive corresponding to $Y$. Thus, in particular, the mixed Tateness of $M(Y)$ implies that $M(X)$ is mixed Tate. In this note we prove the inverse implication. Namely, we prove the following:

\begin{theorem}
Let $G$ be a split reductive group over a perfect field $k$. Let $X$ be a principal $G$-bundle over $Y$. Furthermore, assume either $X$ is locally trivial for the  Zariski topology on $Y$ or $k=\BC$. Then we have the following statements

\begin{enumerate}
\item[i)]
If $Y$ is stratified mixed Tate then $M(X)$ is mixed Tate.
\item[ii)]
If $Y$ is smooth and $M(X)$ is mixed Tate, then $M(Y)$ is mixed Tate. 
\end{enumerate}

In particular, when $k$ is a finite field and $M(Y)$ is stratified mixed Tate (resp. $M(X)$  is mixed Tate  and $Y$ is smooth), the $\BQ$-vector spaces $CH_i(X)$ (resp. $CH_i(Y)$) are finite.

\end{theorem}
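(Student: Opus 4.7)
Part (i) is essentially a restatement of the main theorem of \cite{A-H}: $M(X)$ belongs to the triangulated subcategory of $DM_-^{eff}(k)$ generated by the Tate twists of $M(Y)$. If $Y$ is stratified mixed Tate, then $M(Y)$ is built out of mixed Tate motives of the strata through localization triangles, hence is itself mixed Tate, and therefore so is $M(X)$.

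For part (ii) my strategy is to reduce the question to a principal torus bundle and then to invert the Huber-Kahn slice filtration. Let $B \subset G$ be a Borel subgroup with unipotent radical $U$ and quotient $T := B/U$; form the chain
$$X \longrightarrow X/U \longrightarrow X/B \longrightarrow Y.$$
The first map is an iterated $\BA^1$-bundle, because $U$ is a split unipotent group, so $\BA^1$-homotopy invariance gives $M(X) \simeq M(X/U)$, and hence $M(X/U)$ is mixed Tate. The last map is a Zariski locally trivial bundle (under either hypothesis of the theorem) with complete flag fibre $G/B$, and the motivic Bruhat cellular decomposition yields
$$M(X/B) \;\simeq\; \bigoplus_{w \in W} M(Y)(\ell(w))[2\ell(w)].$$
Since the mixed Tate subcategory is thick, this exhibits $M(Y)$ as a direct summand of $M(X/B)$, so $M(X/B)$ mixed Tate is equivalent to $M(Y)$ mixed Tate. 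Everything therefore reduces to the following assertion: \emph{if $E \to Z$ is a principal $T$-bundle with $Z$ smooth and $M(E)$ mixed Tate, then $M(Z)$ is mixed Tate}.

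The main obstacle is this $T$-bundle claim. Writing $T = \BG_m^r$ and quotienting one factor at a time, an induction on $r$ reduces us to the case $T = \BG_m$. Here the relative slice filtration of \cite[Section 8]{H-K} presents $M(E)$ as a distinguished triangle
$$M(Z)(1)[1] \longrightarrow M(E) \longrightarrow M(Z) \longrightarrow M(Z)(1)[2]$$
compatible with the Huber-Kahn slice functors. Applying the $n$-th slice functor $s_n$, one obtains a distinguished triangle
$$s_{n-1} M(Z)(1)[1] \longrightarrow s_n M(E) \longrightarrow s_n M(Z) \longrightarrow s_{n-1} M(Z)(1)[2].$$
By the slice-theoretic criterion for mixed Tateness proved in \cite[Section 6]{H-K}, the hypothesis says that every $s_n M(E)$ lies in the thick subcategory generated by $\BZ(n)$. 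Since $s_n = 0$ for $n < 0$ on effective motives, the triangle at $n = 0$ identifies $s_0 M(Z) \simeq s_0 M(E)$, which is Tate; an induction on $n$ using the above triangle then transfers the Tate property slice-by-slice to $M(Z)$, proving that $M(Z)$ is mixed Tate.

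For the last statement, over a finite field $k$ the $\BQ$-motivic cohomology of $\Spec k$ is concentrated in bidegree $(0,0)$ (by Quillen's computation of the $K$-theory of finite fields), so every mixed Tate motive over $k$ has finite-dimensional $\BQ$-motivic cohomology in every bidegree. Because $CH_i(-)_\BQ$ is a piece of such motivic cohomology, the finiteness of $CH_i(X)_\BQ$ and $CH_i(Y)_\BQ$ follows.
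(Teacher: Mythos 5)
Your skeleton for part (ii) --- reduce along $X\to X/U\to X/B\to Y$ to a split-torus bundle and then argue slice-by-slice --- is essentially the paper's (which uses $X\to X/T\to X/B\to Y$), but two steps have genuine gaps. First, the assertion that $X/B\to Y$ is Zariski locally trivial ``under either hypothesis'' fails in the case $k=\BC$: a principal $G$-bundle over a smooth complex variety need not be Zariski locally trivial (e.g.\ a $\PGL_2$-bundle with nontrivial Brauer class), and then the associated $G/B$-fibration admits no Zariski-local sections at all, since a local section of $X/B\to Y$ gives a local $B$-reduction of $X$ and $B$-torsors are Zariski locally trivial, which would force $X$ itself to be Zariski locally trivial. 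So your Bruhat-cell splitting $M(X/B)\simeq\bigoplus_{w}M(Y)(\ell(w))[2\ell(w)]$ is only justified in the Zariski-locally-trivial case, where it is the motivic Leray--Hirsch theorem of \cite{A-H}; for $k=\BC$ the paper instead invokes Iyer's Chow--K\"unneth decomposition for rational homogeneous bundles \cite{Iyer}, and your argument needs that input (or a substitute) as well.

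Second, in the $\BG_m$-bundle step your ascending induction establishes only half of the Huber--Kahn criterion (proposition \ref{PropCriterionForMT}): you obtain $c_n(M(Z))\in D^b_f(Ab)$ for all $n$, but the criterion also demands $c_n(M(Z))=0$ for $n$ large, and for $n\gg 0$ (where $c_n(M(E))=0$) your triangle yields only the stabilization $c_n(M(Z))\simeq c_{n-1}(M(Z))$, not vanishing. Some dimension bound on the slices of the motive of a variety must be fed in; the paper does this by working with $M^c$ instead of $M$, compactifying to the $\BP^1$-bundle $X\times^T\BP^1$ and using $c_m(M^c(Y))=0$ for $m>\dim Y$ (proposition \ref{PropBasicPropertiesFMI} c)), which moreover lets it run a descending induction starting at $n=\dim Y+1$. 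Your variant with $M$ and the Gysin triangle is repairable (cite the analogous vanishing for $M(Z)$, or dualize to $M^c$ using smoothness of $Z$), but as written the conclusion ``$M(Z)$ is mixed Tate'' does not follow. A smaller point of the same flavour: in part (i) you pass from ``$Y$ stratified mixed Tate'' to ``$M(Y)$ mixed Tate'' via localization triangles, but localization is available for $M^c$, not for $M$, when the strata closures are singular; the paper's lemma \ref{Lem} is correspondingly stated for $M^c$, and your appeal to \cite{A-H} and the conclusion of (i) should be routed through $M^c$ in the same way. The finiteness statement over finite fields is fine as a sketch and matches what the paper delegates to \cite{A-H2}.
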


\noindent
This is corollary \ref{Corollary} in the text. See definition \ref{DefSMT} for the definition of stratified mixed Tate.

This result has application in computing the motive associated to certain quotient varieties under the action of a reductive algebraic group. The authors will discuss this in a forthcoming work.

\section{Notation and Conventions}

Throughout this article we assume that $k$ is a perfect field. We denote by $\textbf{Sch}_k$ (resp. $\textbf{Sm}_k$) the category of schemes (resp. smooth schemes) of finite type over $k$.


\noindent
To denote the motivic categories over $k$, such as 

$$
\textbf{DM}_{gm}(k),~ \textbf{DM}_{gm}^{eff}(k),~ \textbf{DM}_{-}^{eff}(k),~ \textbf{DM}_{-}^{eff}(k) \otimes\mathbb{Q},~\text{etc.} 
$$  
and the functors $$M(-):\textbf{Sch}_k\rightarrow \textbf{DM}_{gm}^{eff}(k)$$ and $$M^c(-):\textbf{Sch}_k\rightarrow \textbf{DM}_{gm}^{eff}(k),$$ constructed by Voevodsky, we use the same notation that was introduced by him in \cite{Voe}. For the definition of the geometric motives with compact support in positive characteristics we also refer to \cite[Appendix B]{H-K}.\\

\noindent
Moreover for any object $M$ of $\textbf{DM}_{gm}(k)$ we denote by $M^*$ the internal Hom-object ${\underline{\text{Hom}}}_{\textbf{DM}_{gm}}(M,\BZ)$.\\

\noindent
For $X$ in $\CO b(\textbf{Sch}_k)$, we let $CH_i(X)$ and $CH^i(X)$ denote Fulton's $i$-th Chow groups.

\bigskip 

Note finally that throughout this article we either assume that $k$ admits resolution of singularities or we consider the motivic categories after passing to coefficients in $\mathbb{Q}$. \\

\section{Slice filtration and motive of $G$-bundles}

Let us first recall the definition of cellular varieties.

\begin{definition}
A variety is called \emph{cellular} if it contains
a cell (i.e. a variety isomorphic to an affine space) as an open subvariety such that the closed complement is cellular. 
\end{definition}

\begin{definition}\label{DefPureTate}
An object $M$ of $DM_{gm}(k)$ is called pure Tate motive if it is a (finite) direct sum of copies
of $\BZ(p)[2p]$ for $p\in \BZ$.
\end{definition}

\begin{remark}\label{RemHSareCellular}
\begin{enumerate}

\item
Using localization triangle, one can easily check that the motive associated to a smooth cellular variety is pure Tate. See also \cite[Cor. 3.6]{Kah}.
\item
Let $P$ be a parabolic subgroup of a split reductive group $G$. Let us recall that the action of Borel subgroup $B$ on the projective homogeneous variety $G\slash P$ induces a cell decomposition on it. E.g. see \cite[Thm 2.1]{Koeck}.
\end{enumerate}
\end{remark}

\begin{definition}
We denote by $TDM_{gm}^{eff}(k)$ the thick tensor subcategory of $DM_{gm}^{eff}(k)$, generated by $\mathbb{Z}(0)$ and the Tate object $\mathbb{Z}(1)$. An object of $TDM_{gm}^{eff}(k)$ is called a mixed Tate motive. $TDM_{-}^{eff}(k)$ is the localizing subcategory of $DM_{-}^{eff}(k)$ generated by $TDM_{gm}^{eff}(k)$.
\end{definition}

\begin{definition}\label{DefSMT}
A variety $X$ is called
\begin{enumerate}
\item
\emph{mixed Tate}, if $M^c(X) \in TDM_{gm}^{eff}(k)$.
\item
\emph{stratified mixed Tate}, if it admits a stratification $\{X_i\}_{i \in I}$, such that $X_i$ is smooth and mixed Tate, for every $i \in I$.
\end{enumerate}
\end{definition}

\begin{lemma}\label{Lem}
The motive $M^c(X)$ of a stratified mixed Tate variety $X$ is mixed Tate.
\end{lemma}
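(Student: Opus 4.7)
The plan is to proceed by induction on the number of strata, using the localization triangle for motives with compact support as the inductive step. Recall that for a closed immersion $Z \hookrightarrow X$ with open complement $U = X \setminus Z$, Voevodsky's formalism provides a distinguished triangle
$$M^c(Z) \longrightarrow M^c(X) \longrightarrow M^c(U) \longrightarrow M^c(Z)[1]$$
in $DM_{gm}^{eff}(k)$ (using either resolution of singularities or rational coefficients, as stipulated in the Notation section). Since $TDM_{gm}^{eff}(k)$ is by definition a thick (in particular triangulated) subcategory, any two-out-of-three statement for membership in $TDM_{gm}^{eff}(k)$ holds along such triangles.

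First I would reduce to the case of a finite stratification (which is the intended meaning, since $X$ is of finite type) and argue by induction on the number $|I|$ of strata. The base case $|I|=1$ is immediate: $X$ is then smooth and mixed Tate, so $M^c(X)\in TDM_{gm}^{eff}(k)$ by definition. For the inductive step, I would select a stratum $X_{i_0}$ which is closed in $X$; such a stratum exists because in any stratification of a Noetherian scheme one can pick a minimal element with respect to the specialization order, namely a stratum whose closure equals itself. Let $U := X \setminus X_{i_0}$ be the open complement. The remaining strata $\{X_i\}_{i\neq i_0}$ form a stratification of $U$ by smooth mixed Tate locally closed subvarieties, so by the inductive hypothesis $M^c(U)$ is mixed Tate.

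Applying the localization triangle to the closed immersion $X_{i_0} \hookrightarrow X$, I obtain
$$M^c(X_{i_0}) \longrightarrow M^c(X) \longrightarrow M^c(U) \longrightarrow M^c(X_{i_0})[1].$$
Both outer terms lie in $TDM_{gm}^{eff}(k)$: the left by hypothesis (since $X_{i_0}$ is smooth and mixed Tate) and the right by the induction. Since $TDM_{gm}^{eff}(k)$ is a triangulated subcategory, the middle term $M^c(X)$ also lies in $TDM_{gm}^{eff}(k)$, completing the induction.

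The main conceptual point that needs care is the existence of a closed stratum at each stage, i.e.\ that one can totally order the strata compatibly with specialization so that the partial union $\bigcup_{j \le i} X_j$ is closed in $X$ for each $i$; this is where the finiteness and local-closedness of the stratification are essential. Once this ordering is fixed, the induction is entirely mechanical and the proof is a direct application of the localization triangle and the thickness of $TDM_{gm}^{eff}(k)$.
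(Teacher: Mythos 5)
Your proof is correct and takes essentially the same route as the paper's: an induction over the stratification in which the inductive step is the localization triangle for $M^c$ together with the thickness (hence triangulatedness) of $TDM_{gm}^{eff}(k)$. The only cosmetic difference is that you peel off a single closed (minimal) stratum and induct on the number of strata, whereas the paper removes the open dense union of all maximal strata and inducts on the length of the stratification; both variants rest on the same standard frontier-condition property of stratifications.
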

\begin{proof}
Let $\{X_i\}_{i\in I}$ be a stratification of $X$ such that each $X_i$ is smooth and mixed Tate. Consider the partial order on $I$ which is defined as follows:
$$
for~~i\neq j:~~~i <j~~~\text{if~and~only~if}~~~X_i\subseteq\overline{X_j}.
$$
\noindent
One defines the length of a stratification $\{X_i\}_{i \in I}$ to be
$$
\ell(\{X_i\}_{i \in I}):= max\{n;~ \exists~~ i_0 <i_1<\dots<i_n~~ \text{with}~ \{i_j\}_{j=0}^n\subseteq  I \}.
$$
\noindent

Now we prove the lemma by induction on the length of the stratification $\{X_i\}_{i\in I}$ of $X$. The base case is trivial. Assume that the length of the stratification is $n$, and the lemma holds for all stratifications with length $< n$. Set
$$
U:= \bigcup_j  X_j, 
$$
where $j$ runs over all maximal elements of $I$ with respect to the partial order. Since $X\setminus U$ is closed and $X = \overline{U}$, the subset $U$ is open and dense in $X$. Now the statement follows from the induction hypothesis and the following localization triangle
$$
M^c(X \setminus U) \rightarrow M^c(X) \rightarrow M^c(U) \rightarrow M^c(X \setminus U)[1].
$$

\end{proof}

Let us now recall the definition of the $n$-th slice filtration and motivic fundamental invariants from [HK].

\begin{definition}
Define the $n$-th slice filtration $\nu^{\geq n}M$ of a motive $M$ as the internal Hom object $\ul{\text{Hom}}(\BZ(n),M)(n)$.

\end{definition}

\noindent
The functor $c_n(-)$ is a triangulated endofunctor 
$$
c_n(-): DM_{-}^{eff}(k)\to DM_{-}^{eff}(k),
$$
which assigns the $n$-th motivic fundamental invariant $c_n(M)$, to a motive $M \in DM_{-}^{eff}(k)$, constructed by A. Huber and B. Kahn in \cite{H-K}. Below we briefly recall their construction.

\begin{definition}\label{DefSliceFilt}

\begin{enumerate}

\item
For $M$ in $\textbf{DM}_{-}^{eff}(k)$, by adjunction there is a morphism $a^n: \nu^{\geqslant n} M\to M$ (resp. $f^n:\nu^{\geqslant n} M\to \nu^{\geqslant n-1} M$), corresponding to identity
in
$$
\CD
\Hom(\ul\Hom(\BZ(n),M),\ul\Hom(\BZ(n),M))=\Hom(\nu^{\geqslant n} M, M),
\endCD
$$

\noindent
(resp. $=\Hom(\nu^{\geqslant n} M,\nu^{\geqslant n-1} M)$).

\item
Define $\nu_{< n}M = \nu_{\leq n-1}M$ using the following triangle

\begin{equation}\label{Huber_Kahn_Filt}
\xymatrix
   { 
      \nu^{\geqslant n} M \ar[rr]^{a^n} & & M\ar[dl]_{a_n}\\
     & \nu_{< n} M\ar[ul]_{~~~~~~~[1]}  &   
   }
\end{equation}

 The morphism $a_n : M\to \nu_{<n}M$ factors canonically through  $f_n : \nu_{<n+1}M \to \nu_{<n}M$, which allows to define

\begin{equation}\label{Huber_Kahn_Filt}
\xymatrix
   { 
      \nu_{<n+1}M \ar[rr]^{f_n} & & \nu_{<n}M\ar[dl]\\
     & \nu_{n}M\ar[ul]_{~~~~~~~[1]}  &   
   }
\end{equation}

\item
Finally define the following functors

 $$
c_n: \textbf{DM}_{gm}^{eff}(k) \to \textbf{DM}_{gm}^{eff}(k),
 $$

\noindent
where $\nu_n M=: c_n(M)(n)[2n]$. The object $c_n(M)$ is called the \emph{$n$-th motivic fundamental invariant} of $M$. 

\end{enumerate}
\end{definition}

In the following proposition we recall some basic properties of the motivic fundamental invariants.

\begin{proposition}\label{PropBasicPropertiesFMI}
The functor $c_n(-)$ has the following properties:
\newline 
\begin{enumerate}

\item
$c_n(M(1)[2])=c_{n-1}(M)$ for $M \in DM_{-}^{eff}(k)$, 
\item

For $M \in DM _-^{eff}(k)$ and $N \in TDM _-^{eff}(k)$, we have the K\"unneth isomorphism:
$$
\bigoplus_{p+q=n}c_p(M)\otimes c_q(N) \rightarrow c_n(M \otimes N).
$$

\item
Let $\charakt k=0$ and $X$ a $k$-variety of dimension $d$. Then $c_m M^c(X)=0$ for $m>d$ and $c_d M^c(X)=\ul{CH}_d(X)[0]$, 
where $\ul{CH}_d(X)$ denotes the homotopy invariant Nisnevich sheaf with transfers $U \mapsto CH_d(X \otimes _F F(U))$. Here $F(U)$ is the total ring of fractions of $U$. If $X$ is smooth, and with coefficients in $\BQ$, the assumption of characteristic $0$ is not
necessary.

\item
Let $X$ be a smooth variety such that $M(X)$ is a pure Tate motive. Then there is a natural isomorphism

$$
M(X)=\bigoplus_p c_p(M(X))(p)[2p],
$$

with $c_p(M(X)) = CH_p(X)^\ast[0]$. Here $_-^{~\ast}$ denotes the dual of a free abelian group.

\end{enumerate}
\end{proposition}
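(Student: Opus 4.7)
The plan is to verify the four properties of $c_n(-)$ by unwinding Definition~\ref{DefSliceFilt} and invoking results of \cite{H-K}. Properties (1) and (2) are formal consequences of the internal-Hom adjunction and cancellation; (3) is a substantive theorem which I cite; (4) combines (2) with a motivic-cohomology computation.

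For (1), the cancellation theorem $\underline{\Hom}(\BZ(1),-)(1)\cong\id$ on $DM_-^{eff}(k)$ yields a canonical isomorphism
\[
\nu^{\geqslant n}(M(1)[2]) \;=\; \underline{\Hom}(\BZ(n), M(1)[2])(n) \;\cong\; \nu^{\geqslant n-1}(M)\otimes\BZ(1)[2].
\]
Since the triangles defining $\nu_{<n}$ and $\nu_n$ in Definition~\ref{DefSliceFilt} commute with tensoring against the invertible object $\BZ(1)[2]$, this propagates to $\nu_n(M(1)[2])\cong \nu_{n-1}(M)\otimes\BZ(1)[2]$. Unwrapping $\nu_n M = c_n(M)(n)[2n]$ and cancelling the Tate twist on both sides produces the stated identity $c_n(M(1)[2]) \cong c_{n-1}(M)$.

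For the Künneth statement (2), I argue by dévissage in $N$. Both $N\mapsto c_n(M\otimes N)$ and $N\mapsto\bigoplus_{p+q=n}c_p(M)\otimes c_q(N)$ are triangulated functors commuting with small coproducts, and the natural morphism between them is compatible with distinguished triangles; so it suffices to check the isomorphism on the generators $\BZ(q)[m]$ of the localizing subcategory $TDM_-^{eff}(k)$. Iterated application of (1) gives $c_r(\BZ(q)[m]) = \BZ[m-2q]$ for $r=q$ and zero otherwise, so the left side collapses to $c_{n-q}(M)[m-2q]$, matching $c_n(M(q)[m])$ on the right by another application of (1).

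Property (3) I import directly from \cite{H-K}, where it is proved using Bloch's cycle complex; the rational-coefficient extension to positive characteristic is handled via \cite[Appendix B]{H-K}. For (4), write $M(X) \cong \bigoplus_p \BZ(p)[2p]^{\oplus a_p}$ by hypothesis. Additivity of $c_n$ together with the computation in (2) yields $c_p(M(X)) = \BZ^{a_p}[0]$. To identify the rank, I apply $\Hom_{DM}(-,\BZ(p)[2p])$ to the decomposition: the result is $H^{2p,p}(X,\BZ) = CH^p(X)$ on the one hand and $\BZ^{a_p}$ on the other, since $\Hom(\BZ(q)[2q],\BZ(p)[2p])$ vanishes unless $p=q$. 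Poincaré duality for smooth $X$ converts $CH^p(X)$ to $CH_{\dim X - p}(X)$; combined with the duality symmetry carried by the pure-Tate decomposition, this supplies the natural identification $c_p(M(X)) \cong CH_p(X)^*[0]$. The main obstacle is (3), which genuinely requires the cycle-theoretic machinery of \cite{H-K}; the remaining properties reduce formally to the definitions together with (1).
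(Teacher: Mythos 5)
Your route differs from the paper's in that the paper does not prove this proposition at all: it records that (1) follows from the definition and cites \cite{H-K} for the rest (Lemma~4.8 for (2), Propositions~1.7 and 1.8 for (3), Proposition~4.10 for (4)). Your sketches of (1) and (2) essentially reconstruct the Huber--Kahn arguments: (1) is correct, resting on Voevodsky's cancellation theorem both for the identification $\nu^{\geqslant n}(M(1)[2])\cong\nu^{\geqslant n-1}(M)(1)[2]$ and for ``cancelling the Tate twist'' at the end (full faithfulness of $-(n)[2n]$), and (3) is cited exactly as the paper does. In (2), however, be aware that the two points you pass over quickly --- the existence of the natural K\"unneth morphism (which requires the multiplicative structure of the slice filtration, i.e.\ pairings $\nu^{\geqslant p}M\otimes\nu^{\geqslant q}N\to\nu^{\geqslant p+q}(M\otimes N)$) and the commutation of $c_n=\ul{\Hom}(\BZ(n),-)(n)$-type functors with the coproducts used in the d\'evissage over the localizing subcategory $TDM^{eff}_-(k)$ --- are precisely the non-formal content of \cite[Lem.~4.8]{H-K}; asserting them is not the same as having them.

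The genuine gap is in your proof of (4). Your computation correctly yields $c_p(M(X))=\BZ^{a_p}[0]$ and, via $\Hom(M(X),\BZ(p)[2p])=CH^p(X)$, the natural identification $c_p(M(X))\cong CH^p(X)^{\ast}[0]$, i.e.\ the \emph{codimension}-indexed statement. The passage to $CH_p(X)^{\ast}$ is then made by invoking ``the duality symmetry carried by the pure-Tate decomposition'', i.e.\ $a_p=a_{\dim X-p}$. That symmetry comes from $M(X)^{\ast}\cong M(X)(-d)[-2d]$, which requires $X$ smooth and \emph{proper}; for a general smooth variety it fails, and with it your last step. Concretely, for $X=\BA^1$ one has $M(X)=\BZ$, so $c_0(M(X))=\BZ$, while $CH_0(\BA^1)=0$, so $CH_0(X)^{\ast}[0]=0$: the dimension-indexed formula cannot be deduced from the codimension-indexed one by any formal symmetry in this generality. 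So either the statement must be read with $CH^p$ (or with a properness/self-duality hypothesis under which $CH_p(X)\cong CH^{\,d-p}(X)$ and $a_p=a_{d-p}$ restore the equivalence), or one must argue as in \cite[Prop.~4.10]{H-K}, which is what the paper does by citation; your argument as written does not close this step.
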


\begin{proof}
Part a) follows from definition. For part b) see  \cite[lem~4.8]{H-K} and for c) see \cite[Prop~1.7 and Prop~1.8]{H-K}. For the last part see \cite[Prop 4.10]{H-K}.
\end{proof}

Let $D^b(Ab)$ be the bounded derived category of abelian groups $Ab$. Denote by $D_f^b(Ab)$ its full subcategory consisting of those objects whose cohomology groups are finitely generated. Consider the fully faithful triangulated functor 
$$
\iota: D_f^b(Ab) \to DM _{gm}^{eff}(k),
$$ 
which sends $\mathbb{Z}$ to $\mathbb{Z}[0]$ and respect tensor structures. Its essential image is the thick tensor subcategory of $DM _{gm}^{eff}(k)$ generated by $\mathbb{Z}(0)$. Similarly there is a fully faithful tensor functor $\iota: D^-(Ab) \to DM_{-}^{eff}(k)$ from bounded above derived category $D^-(Ab)$ to $DM_{-}^{eff}(k)$, whose essential image is generated by $\mathbb{Z}(0)$.

\bigskip

The following proposition \cite[Prop 4.6]{H-K}, gives a criterion for mixed Tateness of a motive $M$, in terms of the associated fundamental motivic invariants $c_n(M)$.

\begin{proposition}\label{PropCriterionForMT}
Assume either $chark=0$ or coefficients in $\mathbb{Q}$. A motive $M \in DM_{gm}^{eff}(k)$ is in $TDM_{gm}^{eff}(k)$ if and only if $c_n(M) \in D_f^b(Ab)$ for all $n$ and $c_n(M)=0$ for $n$ large enough. If $M \in  TDM_{-}^{eff}(k)$, then $c_n(M) \in D^-(Ab)$.
\end{proposition}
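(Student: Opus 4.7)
The plan is to prove the equivalence by organizing everything around the two generators $\BZ(0)$ and $\BZ(1)$ of $TDM_{gm}^{eff}(k)$, using the formal properties (a), (b) of Proposition \ref{PropBasicPropertiesFMI} and the triangles defining the slice filtration.

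\emph{Necessity.} Let $\mathcal{C} \subseteq DM_{gm}^{eff}(k)$ be the full subcategory of motives $M$ such that $c_n(M) \in D_f^b(Ab)$ for every $n$ and $c_n(M)=0$ for $n \gg 0$. Since $TDM_{gm}^{eff}(k)$ is by definition the thick tensor subcategory generated by $\BZ(0)$ and $\BZ(1)$, it suffices to check that (i) $\BZ(0),\BZ(1) \in \mathcal{C}$ and (ii) $\mathcal{C}$ is stable under shifts, direct summands, distinguished triangles and tensor products. For (i), one computes directly from Definition \ref{DefSliceFilt}: $\nu^{\geqslant 0}\BZ = \BZ$ and $\nu^{\geqslant 1}\BZ = \ul\Hom(\BZ(1),\BZ)(1)=0$, so $c_0(\BZ)=\BZ[0]$ and $c_n(\BZ)=0$ for $n\neq 0$; the analogous statement for $\BZ(1)$ then follows from property (a), which gives $c_n(\BZ(1)[2]) = c_{n-1}(\BZ)$. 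For (ii), triangle stability is automatic because each $c_n$ is a triangulated functor and $D_f^b(Ab)$ is closed under triangles; stability under tensor products uses the K\"unneth isomorphism of property (b), since $D_f^b(Ab)$ is closed under $\otimes$ and finite sums; stability under shifts and summands is immediate.

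\emph{Sufficiency.} Suppose $c_n(M)\in D_f^b(Ab)$ for all $n$ and $c_n(M)=0$ for $n>N$. By definition the $n$-th slice $\nu_n M = c_n(M)(n)[2n]$; since $\iota(D_f^b(Ab))$ is contained in $TDM_{gm}^{eff}(k)$ and the latter is stable under Tate twists, every $\nu_n M$ lies in $TDM_{gm}^{eff}(k)$. Combining the defining triangles of Definition \ref{DefSliceFilt} yields a distinguished triangle $\nu^{\geqslant n+1}M \to \nu^{\geqslant n}M \to \nu_n M$. A downward induction on $n$, starting from a sufficiently large value where $\nu^{\geqslant n}M$ vanishes, shows that $\nu^{\geqslant n}M \in TDM_{gm}^{eff}(k)$ for every $n \geq 0$; taking $n=0$ and using $\nu^{\geqslant 0}M = \ul\Hom(\BZ,M) = M$ for $M\in DM_{-}^{eff}(k)$ concludes that $M\in TDM_{gm}^{eff}(k)$.

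\emph{Main obstacle and the $D^{-}$ statement.} The crux is the convergence of the slice filtration on the right, i.e.\ that $\nu^{\geqslant n}M=0$ for $n$ large enough when $M$ is geometric; this rests on a boundedness-of-weights argument which is where the hypothesis $\operatorname{char} k = 0$ (or rational coefficients) is used to apply property (c). The assertion that $M\in TDM_{-}^{eff}(k)$ forces $c_n(M)\in D^{-}(Ab)$ is proved by the same dévissage as in the necessity direction, but with $D_f^b(Ab)$ replaced by $D^{-}(Ab)$: one extends $\mathcal{C}$ to accommodate arbitrary small coproducts and filtered homotopy colimits, and notes that $D^{-}(Ab)$ is closed under these operations, so boundedness-above of the $c_n$'s survives, although neither finite generation nor eventual vanishing need do so in general.
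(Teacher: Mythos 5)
The paper does not actually prove Proposition \ref{PropCriterionForMT}: it is quoted verbatim from \cite[Prop.~4.6]{H-K}, so your attempt is effectively a reproof of an imported result. Your overall plan is the natural (and Huber--Kahn's) one, and the necessity half is essentially sound: compute $c_n$ on $\BZ(0)$, $\BZ(1)$, use property (a) for twists, and observe that the class of motives with the stated $c_n$-behaviour is thick (the tensor step is harmless here, since in this direction one factor is already mixed Tate, so the K\"unneth formula (b) applies).

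The gap is in the sufficiency direction, at exactly the point you call the crux. You need $\nu^{\geqslant n}M=0$ for $n\gg 0$ to start the downward induction, and you assert this follows by ``applying property (c)''. It does not: property (c) only gives vanishing of the \emph{slices}, $c_m(M^c(X))=0$ for $m>\dim X$, and feeding $\nu_nM=0$ into the triangles $\nu^{\geqslant n+1}M\to\nu^{\geqslant n}M\to\nu_nM$ shows merely that $\nu^{\geqslant n}M$ is eventually \emph{constant} in $n$, not eventually zero. What is actually required is the separatedness (finiteness) of the slice filtration on geometric motives: $\ul\Hom(\BZ(n),M^c(X))=0$, i.e.\ $\nu^{\geqslant n}M^c(X)=0$, for $n>\dim X$ --- this is \cite[Prop.~1.7]{H-K}, proved via duality and the comparison with higher Chow groups, and it is precisely where the hypothesis $\charakt k=0$ or $\BQ$-coefficients enters --- followed by d\'evissage through the thick subcategory generated by the $M^c(X)$ to reach arbitrary objects of $DM_{gm}^{eff}(k)$. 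Without this input your induction has no starting point; supplying it is the nontrivial content behind the cited Prop.~4.6. A secondary wobble: in the $D^-(Ab)$ statement, $D^-(Ab)$ is \emph{not} closed under arbitrary small coproducts (the upper bounds of the summands can tend to $+\infty$), so your extension of the d\'evissage to the localizing subcategory needs to use that only coproducts existing in $DM_-^{eff}(k)$ (uniformly bounded families) occur, or to follow Huber--Kahn's argument for that part directly.
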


In \cite{H-K} theorem 8.8 the authors construct a relative slice filtration on a torus bundle. They use this filtration to compute the motive associated to a split reductive group $G$. Below we recall their result. 

\begin{proposition}\label{PropRSFTB}
Let $T$ be a split torus of dimension $r$ and $X$ a principal $T$ bundle over a smooth variety $Y$ over a field $k$. There is a filtration
$$
\nu_Y^{\geqslant p+1}M(X)\rightarrow \nu_Y^{\geqslant p}M(X) \rightarrow ... \rightarrow M(X)
$$
in $DM_{gm}^{eff}(k)$, where $M(X) \cong \nu^{\geqslant 0}M(X)$, $\nu^{\geqslant r+1}M(X)=0$, together with distinguished triangles

\begin{equation}\label{Huber_Kahn_Filt}
\xymatrix
   { 
      \nu_Y^{\geqslant p+1}M(X) \ar[rr] & & \nu_Y^{\geqslant p}M(X)\ar[dl]\\
     & M(Y)(p)[p]\otimes\wedge^p(\Xi)\ar[ul]_{~~~~~~~~~~~~~~~~~~[1]}  &   
   }
\end{equation}
for $0 \leq p \leq r$. Here $\Xi= Hom (\mathbb{G}_m, T)$ is the cocharacter group.
\end{proposition}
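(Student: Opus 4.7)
The plan is to construct the filtration by splitting $T\cong\BG_m^r$, iterating the rank-one case, and then using Zariski descent to identify the graded pieces canonically with $\wedge^p\Xi$. Since $T$ is split, Hilbert~90 ensures that every $T$-torsor over $Y$ is Zariski-locally trivial, so one may work comfortably with the local structure of $M(X)$ inside $\textbf{DM}_{gm}^{eff}(k)$.

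For the base case $r=1$, a principal $\BG_m$-bundle $X\to Y$ is the complement of the zero section in the associated line bundle $L\to Y$, where $L=X\times^{\BG_m}\BA^1$. Combining the Gysin (localisation) triangle for the closed immersion $Y\hookrightarrow L$, $\BA^1$-homotopy invariance $M(L)\cong M(Y)$, and the Thom isomorphism $M_Y(L)\cong M(Y)(1)[2]$ for a line bundle, one obtains a distinguished triangle
$$
M(X)\longrightarrow M(Y)\longrightarrow M(Y)(1)[2]\longrightarrow M(X)[1],
$$
whose connecting morphism is cup-product with $c_1(L)$. Declaring $\nu_Y^{\geqslant 1}M(X)$ to be the fibre of $M(X)\to M(Y)$ yields the two-step filtration, with $\nu_Y^{\geqslant 1}M(X)\cong M(Y)(1)[1]\otimes\Xi$ since $\Xi=\Hom(\BG_m,\BG_m)=\BZ$.

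For the inductive step, choose a splitting $T\cong\BG_m\times T'$ with $T'$ of rank $r-1$ and cocharacter lattice $\Xi'$, so that $\Xi\cong\BZ\oplus\Xi'$ and $\wedge^p\Xi\cong\wedge^p\Xi'\oplus\wedge^{p-1}\Xi'$. Set $Z:=X/T'$, so that $X\to Z$ is a principal $T'$-bundle and $Z\to Y$ is a principal $\BG_m$-bundle. By induction $M(X)$ carries a filtration over $Z$ whose $q$-th graded piece is $M(Z)(q)[q]\otimes\wedge^q\Xi'$, while by the base case $M(Z)$ sits in a triangle built from $M(Y)$ and $M(Y)(1)[1]$. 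Refining the former filtration at each step via the latter triangle (repeated octahedral axiom) produces a filtration of $M(X)$ over $Y$ whose $p$-th graded piece assembles to
$$
M(Y)(p)[p]\otimes\bigl(\wedge^p\Xi'\oplus\wedge^{p-1}\Xi'\bigr)\;=\;M(Y)(p)[p]\otimes\wedge^p\Xi.
$$

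The main obstacle is genuine naturality in $\Xi$ together with independence of the chosen splitting: the octahedral assembly is non-canonical, so a priori the graded pieces are only identified with free abelian groups of the correct rank. To upgrade this to the $\wedge^p\Xi$-structure, I would invoke Zariski descent. Over a trivialising open $U\subset Y$ one has $M(X|_U)\cong M(U)\otimes M(T)$, and the K\"unneth decomposition together with $M(\BG_m)=\BZ\oplus\BZ(1)[1]$ gives the natural splitting $M(T)=\bigoplus_p\wedge^p\Xi\otimes\BZ(p)[p]$, in which $\wedge^p\Xi$ appears functorially through the action of $T$ on itself by translations. Gluing these local models along the \v{C}ech nerve of a trivialising cover of $Y$ and verifying that the transitions act through $\wedge^{\bullet}\Xi$ then pins down the filtration canonically; this descent argument is essentially the route taken by Huber--Kahn in \cite{H-K}.
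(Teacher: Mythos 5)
Note first that the paper contains no proof of this statement: Proposition~\ref{PropRSFTB} is simply recalled from Huber--Kahn (\cite[Thm.\ 8.8]{H-K}), so your attempt cannot be matched against an argument in the text and has to be judged on its own merits.

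Your base case is correct and standard (Gysin triangle for the zero section of $L=X\times^{\BG_m}\BA^1$ together with homotopy invariance), and the skeleton $X\to Z=X/T'\to Y$ with induction on the rank is a reasonable plan. The genuine gap is in the inductive step, exactly at the point you flag and then dispose of too quickly. Refining the filtration of $M(X)$ relative to $Z$ by the two-step filtration of $M(Z)$ relative to $Y$ via octahedra only yields, for each twist $p$, a subquotient that is an \emph{extension} involving $M(Y)(p)[p]\otimes\wedge^{p}\Xi'$ and $M(Y)(p)[p]\otimes\wedge^{p-1}\Xi'$ (and, after the full induction, an iterated extension of $\binom{r}{p}$ copies of $M(Y)(p)[p]$). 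The relevant extension classes live in groups of the shape $\Hom(M(Y),M(Y)[1])\otimes(\cdots)$, which have no reason to vanish for a general smooth $Y$; the claim that these pieces ``assemble to'' the direct sum $M(Y)(p)[p]\otimes\wedge^p\Xi$ is precisely the nontrivial content of the proposition and is nowhere proved in your sketch. Moreover, the remedy you propose---gluing the local models $M(U)\otimes M(T)$ along the \v{C}ech nerve of a trivialising cover---is not an available move in $DM_{gm}^{eff}(k)$: cones are not functorial and triangulated categories do not satisfy Zariski descent, so neither the filtration nor the claimed splitting of its graded pieces can be ``pinned down'' from local data in this way. What local triviality can legitimately give you is the reverse direction: first construct canonical \emph{global} comparison morphisms (for instance from the $T$-action $X\times T\to X$ together with the canonical decomposition $M(T)\cong\bigoplus_p\wedge^p\Xi\otimes\BZ(p)[p]$, or by exhibiting the connecting maps as explicit Koszul-type cup products with the classes in $\Pic(Y)\otimes\Xi^{\vee}$ classifying the torsor), and only then check that they are isomorphisms after restriction to a finite trivialising cover, using Mayer--Vietoris triangles and the five lemma. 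As it stands the proposal has this logic reversed, and its closing assertion that the \v{C}ech-gluing argument ``is essentially the route taken by Huber--Kahn'' is not supported by anything in the paper, which merely cites their theorem.
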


\begin{remark}
Note moreover that in \cite[$\S$4]{A-H} the authors construct a nested filtration on the motive associated to a $G$-bundle using the theory of wonderful compactification of reductive groups.
\end{remark}

\begin{theorem}\label{MainThm}
Let $T$ be a torus of dimension $r$. For a $T$-bundle $X$ over $Y$, we have the following statements:
\begin{enumerate}
\item[i)] 
If $Y$ is stratified mixed Tate, then $X$ is mixed Tate.
\item[ii)]
Assume that $Y$ is smooth. If $X$ is mixed Tate, then $Y$ is mixed Tate. 
\end{enumerate}
\end{theorem}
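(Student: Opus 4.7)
For part (i), my approach is to pull the stratification back from $Y$ to $X$. Given a stratification $\{Y_i\}_{i\in I}$ of $Y$ by smooth mixed Tate strata, put $X_i := X\times_Y Y_i$; this is a principal $T$-bundle over the smooth $Y_i$, hence itself smooth. Since $Y_i$ is smooth, mixed Tateness of $M^c(Y_i)$ is equivalent to that of $M(Y_i)$ via the duality $M^c(Y_i)\simeq M(Y_i)^{\vee}(\dim Y_i)[2\dim Y_i]$ in $DM_{gm}(k)$, so $M(Y_i)$ lies in $TDM_{gm}^{eff}(k)$. Proposition \ref{PropRSFTB} then provides a finite filtration of $M(X_i)$ whose graded pieces $M(Y_i)(p)[p]\otimes\wedge^p\Xi$ are all mixed Tate, forcing $M(X_i)$, and hence $M^c(X_i)$, to lie in $TDM_{gm}^{eff}(k)$. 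The family $\{X_i\}$ is then a stratification of $X$ by smooth mixed Tate pieces, and Lemma \ref{Lem} concludes that $M^c(X)$ is mixed Tate.

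For part (ii), I would argue by induction on $r=\dim T$, reducing to the case $r=1$ by factoring $T=\mathbb{G}_m\times T'$: the quotient $X':=X/\mathbb{G}_m$ is a principal $T'$-bundle over the smooth $Y$ and is smooth, while $X\to X'$ is a principal $\mathbb{G}_m$-bundle. Granted the $r=1$ case, mixed Tateness of $M(X)$ yields mixed Tateness of $M(X')$, and the inductive hypothesis finishes the argument. For the base case, Proposition \ref{PropRSFTB} with $r=1$ specializes to the distinguished triangle
\[
M(Y)(1)[1]\longrightarrow M(X)\longrightarrow M(Y)\longrightarrow M(Y)(1)[2].
\]
Applying the triangulated functor $c_n$ and using $c_n\bigl(M(1)[1]\bigr)=c_{n-1}(M)[-1]$, which is immediate from Proposition \ref{PropBasicPropertiesFMI}(a) and the fact that $c_n$ commutes with shifts, the triangle becomes, after rotation,
\[
c_n\bigl(M(X)\bigr)\longrightarrow c_n\bigl(M(Y)\bigr)\longrightarrow c_{n-1}\bigl(M(Y)\bigr)\longrightarrow c_n\bigl(M(X)\bigr)[1].
\]

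By Proposition \ref{PropCriterionForMT}, I must show $c_n(M(Y))\in D_f^b(Ab)$ for every $n$ and $c_n(M(Y))=0$ for $n$ sufficiently large. The second assertion is the crucial a priori input, obtained from the general fact that any $M\in DM_{gm}^{eff}(k)$ has eventually vanishing slices---this reduces to the smooth projective case where Proposition \ref{PropBasicPropertiesFMI}(c) applies, either via resolution of singularities or by passing to $\mathbb{Q}$-coefficients, as assumed throughout. Granted this vanishing, a decreasing induction on $n$ concludes the proof: the rotated triangle exhibits $c_{n-1}(M(Y))$ as the cone of the morphism $c_n(M(X))\to c_n(M(Y))$, so the inductive hypothesis $c_n(M(Y))\in D_f^b(Ab)$, together with $c_n(M(X))\in D_f^b(Ab)$ (coming from mixed Tateness of $M(X)$), forces $c_{n-1}(M(Y))\in D_f^b(Ab)$. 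The main obstacle is securing the a priori vanishing $c_n(M(Y))=0$ for $n\gg 0$: without it the triangle only shows that the sequence $c_n(M(Y))$ stabilizes outside the support of $c_\bullet(M(X))$, with no way to pin the common value to zero.
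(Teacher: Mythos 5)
Your part (i) is essentially the paper's own argument: pull the stratification back to $X$, use smoothness of the strata and duality to pass between $M$ and $M^c$, run the relative slice filtration of Proposition \ref{PropRSFTB} over each stratum, and finish with Lemma \ref{Lem}. Part (ii) is where you genuinely diverge. The paper proves the $r=1$ case with motives with compact support: it compactifies the $\mathbb{G}_m$-bundle to the $\mathbb{P}^1$-bundle $\mathcal{P}=X\times^T\mathbb{P}^1$, and the localization triangle plus the projective bundle formula give $M^c(X)\to M^c(Y)\oplus M^c(Y)(1)[2]\to M^c(Y)^{\oplus 2}\to M^c(X)[1]$; the crucial large-$n$ vanishing $c_n(M^c(Y))=0$ for $n>\dim Y$ is then literally Proposition \ref{PropBasicPropertiesFMI} c), which starts the same descending induction you run. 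You instead stay with $M(-)$ and use the $r=1$ instance of Proposition \ref{PropRSFTB}, i.e. the triangle $M(Y)(1)[1]\to M(X)\to M(Y)\to M(Y)(1)[2]$; this is cleaner in that it quotes the relative slice filtration directly and needs no compactification, but the price is exactly the point you flag yourself: the a priori vanishing $c_n(M(Y))=0$ for $n\gg 0$ is not among the results quoted in the paper (Proposition \ref{PropBasicPropertiesFMI} c) concerns $M^c$, not $M$), so you must import the boundedness of the slice filtration on $DM_{gm}^{eff}(k)$, e.g. via thick generation by smooth projective varieties under resolution of singularities or with $\mathbb{Q}$-coefficients. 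That input is true under the paper's standing assumptions and your sketch of it is acceptable, but the paper's choice of $M^c$ exists precisely to avoid it. Your reduction to $r=1$ by splitting off a $\mathbb{G}_m$-factor is the same device as the paper's tower $X=X_r\to\cdots\to X_0=Y$ of $\mathbb{G}_m$-bundles. Two pieces of bookkeeping you should make explicit in part (ii): ``mixed Tate'' in Definition \ref{DefSMT} is phrased with $M^c$, so you need smoothness of $X$ and the duality $M^c(X)\cong M(X)^{*}(\dim X)[2\dim X]$ (together with the paper's remark that internal Homs of mixed Tate motives are mixed Tate) at the start, to convert the hypothesis into $c_n(M(X))\in D_f^b(Ab)$, and again at the end to convert mixed Tateness of $M(Y)$ into mixed Tateness of $Y$; the paper records this equivalence in its proof of part (i), and with it your argument goes through.
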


\begin{proof}

First we prove ii). For this let us first assume that $r=1$, i.e. $T=\BG_m$, and $X$ is a $\mathbb{G}_m$-bundle over $Y$. For the $\mathbb{P}^1$-bundle $X \times ^T \mathbb{P}^1= \CP$, consider the following exact triangle
$$
M^c(X) \rightarrow  M^c(\CP) \rightarrow  M^c(\CP\setminus X) \rightarrow  M^c(X)[1].
$$
By projective bundle formula \cite[Thm 15.12]{MVW}, we get
$$
M^c(X) \rightarrow M^c(Y)\oplus M^c(Y)(1)[2] \rightarrow (M^c(Y))^{\oplus 2} \rightarrow  M^c(X)[1].
$$ 
Applying functor $c_n(-)$ to the above triangle we get 
$$
c_n(M^c(X)) \rightarrow c_n(M^c(Y))\oplus c_{n-1}(M^c(Y)) \rightarrow (c_n(M^c(Y)))^{\oplus 2} \rightarrow  c_n(M^c(X)[1]).
$$ 
Note that $c_n(M(1)[2])=c_{n-1}(M)$, see proposition \ref{PropBasicPropertiesFMI} a). For $n>\dim Y$, we have $c_n(M^c(Y))=0$, see proposition \ref{PropBasicPropertiesFMI} c), and therefore we have
$$
c_n(M^c(X))\isoto c_{n-1}(M^c(Y)).
$$
Since $X$ is mixed Tate, by proposition \ref{PropCriterionForMT} we see that $c_n((M^c(X))$, and hence $c_{n-1}(M^c(Y))$, lie in $D_f^b(Ab)$. Now, regarding the following triangle
$$
c_{n-1}(M^c(X)) \rightarrow c_{n-1}(M^c(Y))\oplus c_{n-2}(M^c(Y)) \rightarrow (c_{n-1}(M^c(Y)))^{\oplus 2},
$$ 
and proposition \ref{PropCriterionForMT}, we observe that $c_{n-2}(M^c(Y))$ also lies in $D_f^b(Ab)$. Following this way, we can argue recursively that $c_i(M^c(Y))$ lie in $D_f^b(Ab)$ for all $i<n$. 
\\
Note that the general case reduces to the case where $r=1$. Namely, recall that for a principal $G$-bundle $\CG \rightarrow Y$, and a closed subgroup scheme $H$ of the reductive algebraic group $G$, the quotient map $\CG \rightarrow \CG/H\cong \CG\times^G(G/H)$ is a principal $H$-bundle. Regarding this, we can view the $T$-bundle $X$ as a sequence $$X=:X_r \rightarrow X_{r-1}  \rightarrow \dots \rightarrow X_2 \rightarrow X_1\rightarrow X_0:=Y,$$    
such that $X_i$ is a $\mathbb{G}_m$-bundle over $X_{i-1}$ for $1 \leq  i \leq r$.\\

\noindent
To prove i) let us first assume that $Y$ is smooth. In this case we have the following sequence

\begin{equation}\label{Huber_Kahn_Filt}
\xymatrix @C=-0.2pc {
\nu^{\geqslant p+1}_Y M(X) \ar[rrrrrr]& & & & & & \nu^{\geqslant p}_Y M(X) \ar[dd]& && \nu ^{\geqslant2}_Y M(X) \ar[rrrrr]& & & && \nu^{\geqslant 1}_Y M(X) \ar[rrrrrr]\ar[dd]& & && & & M(X)\ar[dd]  \\
& & & & & & & & & ... & &  & &  & && & & & & \\
 & & & & & & \lambda_p(Y,T)\ar[uullllll]_{~~~[1]}& & & &  & & & & \lambda_1(Y,T)\ar[uulllll]_{~~[1]} & & & & & & \lambda_0(Y,T)\ar[uullllll]_{~~[1]}
}
\end{equation}
of exact triangles, according to proposition \ref{PropRSFTB}. Here 
$$
\lambda_p(Y,T):= M(Y)(p)[p] \otimes \Lambda^p(\Xi),
$$ 
for $0 \leq  p \leq  r$. Recall that $M(X) \cong \nu_Y^{\geqslant 0}M(X)$ and $\nu_Y^{\geqslant r+1}M(X)=0$.\\
Since $\nu_Y^{\geqslant r+1}M(X)=0$ we get an isomorphism 
$$\nu_Y^{\geqslant r}M(X)\isoto M(Y)(p)[p]\otimes\Lambda^p(\Xi).
$$ 
Therefore $\nu_Y^{\geqslant r}M(X)$ is mixed Tate. This is because $Y$ is smooth and mixed Tate, and that $M(Y)^\ast\cong M^c(Y)(-n)[-2n]$. Note that for mixed Tate motives $M$ and $N$ in $DM_{gm}^{eff}(k)$, the Hom object $\ul{Hom}(M,N)$ is mixed Tate. This follows from the fact that $\ul{Hom}(\BZ(i),\BZ(j))$ equals $\BZ(j-i)$ if $i\leq j$ and vanishes elsewise. In particular we see that $M$ is mixed Tate if the dual object $M^\ast$ is mixed Tate, which justifies that for smooth schemes the definition \ref{DefSMT}. agrees with the alternative definition using the functor $M(-)$.\\ 
Regarding the above discussion and the diagram \ref{Huber_Kahn_Filt}, we may argue recursively  that $\nu_Y^{\geqslant i}M(X)$ are mixed Tate, for every $i\geq 0$.\\ 

Now, assume that $Y$ is stratified mixed Tate, with stratification $\{Y_i\}_i$. Then, by the above arguments we see that $X_i:=X\times_Y Y_i$ are mixed Tate and hence $X$ is stratified mixed Tate. Thus we may conclude by lemma \ref{Lem} 
\end{proof}

\begin{corollary}\label{Corollary}
Let $G$ be a split reductive group over a perfect field $k$. Let $X$ be a principal $G$-bundle over $Y$. Furthermore, assume either $X$ is locally trivial for the  Zariski topology on $Y$ or $k=\BC$. Then we have the following statements

\begin{enumerate}
\item[i)]
If $Y$ is stratified mixed Tate then $M(X)$ is mixed Tate.
\item[ii)]
If $Y$ is smooth and $M(X)$ is mixed Tate then $M(Y)$ is mixed Tate. 
\end{enumerate}
In particular, when $k$ is a finite field and $M(Y)$ is stratified mixed Tate (resp. $M(X)$  is mixed Tate  and $Y$ is smooth), the $\BQ$-vector spaces $CH_i(X)$ (resp. $CH_i(Y)$) are finite.

\end{corollary}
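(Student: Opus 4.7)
The plan is to reduce the corollary to the $T$-bundle case already handled in Theorem~\ref{MainThm}. Fix a Borel subgroup $B \subset G$ containing $T$, with unipotent radical $U$; thus $B = T \ltimes U$ and $U$ is isomorphic to an affine space. Associated to the $G$-bundle $X \to Y$ one has the tower
\[
X \longrightarrow X/U \longrightarrow X/B \longrightarrow Y,
\]
in which the first map is a principal $U$-bundle, the second a principal $T$-bundle, and the third a $G/B$-fibration. Under the Zariski local triviality hypothesis on $X \to Y$ all three maps are Zariski-locally trivial; for $k = \BC$ (where one may only have \'etale local triviality) we appeal to the nested filtration produced by \cite{A-H} via the wonderful compactification of $G$.

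Two geometric reductions feed the argument. First, the $U$-bundle $X \to X/U$ is Zariski-locally an $\BA^d$-bundle, so $\BA^1$-homotopy invariance combined with Mayer--Vietoris yields $M(X) \isoto M(X/U)$, and similarly for $M^c$. Second, the cellular structure of $G/B$ (Remark~\ref{RemHSareCellular}) together with a relative decomposition argument gives a flag-bundle formula
\[
M(X/B) \cong \bigoplus_{w \in W} M(Y)(\ell(w))[2\ell(w)],
\]
valid for Zariski-locally trivial $G/B$-bundles (and analogously for $M^c$); in particular $M(Y)$ appears as the direct summand corresponding to $w = 1$.

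With these in hand, part i) proceeds as follows: if $Y$ is stratified mixed Tate, the flag-bundle decomposition combined with Lemma~\ref{Lem} shows that $X/B$ is stratified mixed Tate, so Theorem~\ref{MainThm}(i) applied to the $T$-bundle $X/U \to X/B$ yields $M^c(X) \cong M^c(X/U)$ mixed Tate. For part ii), assume $Y$ smooth and $M(X)$ mixed Tate. Then $X/U$ and $X/B$ are smooth, $M(X/U) \cong M(X)$ is mixed Tate, and Theorem~\ref{MainThm}(ii) applied to the $T$-bundle $X/U \to X/B$ shows that $M(X/B)$ is mixed Tate. The flag-bundle decomposition then exhibits $M(Y)$ as a direct summand of a mixed Tate motive, and therefore $M(Y)$ is itself mixed Tate.

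The main obstacle is the flag-bundle decomposition outside the Zariski-locally trivial setting, which is precisely why the $k = \BC$ case is routed through the wonderful-compactification arguments of \cite{A-H} rather than handled directly. Finally, the $\BQ$-Chow finiteness over $\BF_q$ follows from Proposition~\ref{PropCriterionForMT} together with Proposition~\ref{PropBasicPropertiesFMI}(c): mixed Tateness forces each $c_i(M^c(-))$ into $D_f^b(Ab)$, and a standard d\'evissage on dimension translates this into finiteness of the rational Chow groups $CH_i(X)_\BQ$ and $CH_i(Y)_\BQ$.
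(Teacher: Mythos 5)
Your dévissage through the Borel subgroup is essentially the paper's argument: the paper factors as $X \to X/T \to X/B \to Y$ (a $T$-bundle, an affine bundle with fibre $B/T\cong U$, and a $G/B$-fibration) and combines Theorem \ref{MainThm} with the cellularity of $G/B$ (Remark \ref{RemHSareCellular}), Proposition \ref{PropBasicPropertiesFMI} d) and the motivic Leray--Hirsch theorem of \cite{A-H}; your variant $X \to X/U \to X/B \to Y$ is the same argument with the unipotent step taken first, and for the Zariski-locally trivial case your flag-bundle decomposition is exactly what Leray--Hirsch provides. So for that case the proposal is fine.

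The genuine gap is the case $k=\BC$, where the bundle (hence the associated $G/B$-fibration) need not be Zariski-locally trivial. You propose to route this case through the nested filtration of \cite[\S 4]{A-H} coming from the wonderful compactification. But that filtration only expresses $M(X)$ as built out of Tate twists of $M(Y)$; it gives (at best) the forward implication i), and it does not exhibit $M(Y)$ as a direct summand of $M(X/B)$, which is precisely what your argument for ii) needs. Since ii) is the converse implication --- the actual point of the corollary --- your $k=\BC$ argument does not close. The paper instead invokes Iyer's theorem \cite[Thm 1.1.b)]{Iyer} on absolute Chow--K\"unneth decompositions for rational homogeneous bundles, which supplies the required decomposition of $M(X/B)$ over $\BC$ without Zariski local triviality. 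A secondary weakness: your sketch of the finiteness of $CH_i(\cdot)\otimes\BQ$ over a finite field never uses finiteness of $k$; having $c_n(M^c)$ in $D^b_f(Ab)$ plus ``d\'evissage on dimension'' is not enough (over an infinite field mixed Tateness does not give finite-dimensional rational Chow groups, e.g. because $H^1_{\mathcal M}(\Spec k,\BQ(1))=k^\times\otimes\BQ$), and one needs the vanishing/finiteness of rational motivic cohomology of $\BF_q$ fed through the slice spectral sequence; the paper simply cites \cite[Prop 3.9]{A-H2} for this.
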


\begin{proof}

Assume that $Y$ is smooth. Let $T$ be the maximal torus in $G$ and let $B$ be a Borel subgroup containing $T$. Applying the above theorem to the $T$-bundle $X\to X\slash T$, we see that $X$ is mixed Tate if and only if $X\slash T$ is mixed Tate. Note that $B=TU$ for the unipotent group $U$, and thus $X\slash T\to X\slash B$ is an affne bundle. The latter is a projective homogeneous fiberation over $Y$. Now the statement follows from remark \ref{RemHSareCellular} b), proposition \ref{PropBasicPropertiesFMI} d), motivic Leray-Hirsch theorem \cite[Thm 2.8]{A-H}, when $X$ is locally trivial for the Zariski topology, and from \cite[Thm 1.1.b)]{Iyer} when $k=\BC$. Note that for part $i)$, regarding lemma \ref{Lem}, we may reduce to the case that $Y$ is smooth and mixed Tate. \\
For the remaining statement see \cite[Prop 3.9]{A-H2}.
\end{proof}

\paragraph{Acknowledgment:} The first author warmly thanks B. Kahn for very useful comments on the earlier draft of the paper. She thanks J. Ayoub and E. Arasteh Rad for useful comments and discussions. She is also grateful to Prof. Luca Barbieri Viale for steady encouragement.\\ This research was in part supported by a grant from IPM (No.1402140032). 
%
%

%
%

{\small

}

\begin{minipage}[t]{1\linewidth}
\noindent
\small\textbf{Somayeh Habibi},\\ 
-Faculty of Mathematics, Tehran Polytechnic Univ., 424 Hafez Ave., Tehran 15914\\
-School of Mathematics, Institute for Research in Fundamental Sciences (IPM), P.O. Box: 19395-5746, Tehran, Iran
 email: \href{shabibi@ipm.ir}{shabibi@ipm.ir}

\bigskip

\noindent
\small\textbf{Farhad Rahmati}, Faculty of Mathematics, Tehran Polytechnic Univ., 424 Hafez Ave., Tehran 15914, Iran, email: \href{frahmati@aut.ac.ir}{frahmati@aut.ac.ir}

\end{minipage}

\end{document}